\newtheorem{theorem}{Theorem}[section]
\theoremstyle{definition}
\newtheorem{example}[theorem]{Example}
\newtheorem{remark}[theorem]{Remark}
\newtheorem{definition}[theorem]{Definition}
\theoremstyle{definition}
\mathchardef\za="710B  
\mathchardef\zb="710C  
\mathchardef\zg="710D  
\mathchardef\zd="710E  
\mathchardef\zve="710F 
\mathchardef\zz="7110  
\mathchardef\zh="7111  
\mathchardef\zvy="7112 
\mathchardef\zi="7113  
\mathchardef\zk="7114  
\mathchardef\zl="7115  
\mathchardef\zm="7116  
\mathchardef\zn="7117  
\mathchardef\zx="7118  
\mathchardef\zp="7119  
\mathchardef\zr="711A  
\mathchardef\zs="711B  
\mathchardef\zt="711C  
\mathchardef\zu="711D  
\mathchardef\zvf="711E 
\mathchardef\zq="711F  
\mathchardef\zc="7120  
\mathchardef\zw="7121  
\mathchardef\ze="7122  
\mathchardef\zy="7123  
\mathchardef\zvw="7124  
\mathchardef\zvr="7125 
\mathchardef\zvs="7126 
\mathchardef\zf="7127  
\mathchardef\zG="7000  
\mathchardef\zD="7001  
\mathchardef\zY="7002  
\mathchardef\zL="7003  
\mathchardef\zX="7004  
\mathchardef\zP="7005  
\mathchardef\zS="7006  
\mathchardef\zU="7007  
\mathchardef\zF="7008  
\mathchardef\zW="700A  
\mathchardef\zC=\Psi
\newcommand{\be}{\begin{equation}}
\newcommand{\ee}{\end{equation}}
\newcommand{\bea}{\begin{eqnarray}}
\newcommand{\eea}{\end{eqnarray}}
\newcommand{\beas}{\begin{eqnarray*}}
\newcommand{\eeas}{\end{eqnarray*}}
\newcommand{\we}{\wedge}
\newcommand{\ot}{\otimes}
\newcommand{\g}{\mathfrak{g}}
\newcommand{\Ci}{C^{\infty}}
\newcommand{\pa}{\partial}
\newcommand{\ti}{\times}
\newcommand{\ad}{{\mathrm{ad}}}
\newcommand{\cN}{{\mathcal N}}
\newcommand{\cK}{{\mathcal K}}
\newcommand{\cT}{{\mathcal T}}
\newcommand{\cX}{\mathcal{X}}
\newcommand{\cZ}{\mathcal{Z}}
\newcommand{\Sec}{\operatorname{Sec}}
\newcommand{\Ad}{\operatorname{Ad}}
\def\br{{\mathbf r}}
\def\sJ{{\mathsf J}}
\def\sT{{\mathsf T}}
\def\sV{{\mathsf V}}
\def\sj{{\mathsf j}}
\def\xi{\mathrm{i}}
\def\dt{\xd_{\sT}}
\def\cN{{\mathcal N}}
\def\b|{\,\big|\,}
\def\dim{\operatorname{dim}}
\newcommand{\id}{\mathrm{id}}
\def\g{\mathfrak{g}}
\def\k{\mathfrak{k}}
\newcommand{\half}{{\frac{1}{2}}}
\newcommand{\R}{{\mathbb R}}
\newcommand{\C}{{\mathbb C}}
\newcommand{\Z}{{\mathbb Z}}
\newcommand{\xd}{\textnormal{d}}
\newcommand{\mn}{{\medskip\noindent}}
\newcommand{\no}{{\noindent}}
\def\bl{\big( }
\def\br{\big) }
\def\Bl{\Big( }
\def\Br{\Big) }
\def\z2{{\Z_2^n}}
\def\bl{\big(}
\def\br{\big)}
\def\Bl{\Big(}
\def\Br{\Big)}
\begin{document}
\title{\textbf{Nijenhuis operators on Banach fibrations}\footnote{This research was partially funded by the National Science Centre (Poland) within the project WEAVE-UNISONO, No. 2023/05/Y/ST1/00043.}}
\author{Katarzyna Grabowska\footnote{email:konieczn@fuw.edu.pl }\\
\textit{Faculty of Physics,
                University of Warsaw}
\\ \\
Janusz Grabowski\footnote{email: jagrab@impan.pl} \\
\textit{Institute of Mathematics, Polish Academy of Sciences}}
\date{}
\maketitle
\begin{abstract}
In the infinite-dimensional Banach setting, we consider general smooth Banach fibrations $\zt:M\to M_0$ and `$(1,1)$-tensors' $N:\sT M\to\sT M$ that are projectable (in the obvious sense) onto Nijenhuis operators $N_0:\sT M_0\to\sT M_0$ on $M_0$. We prove that the vanishing of the Nijenhuis torsion of $N_0$ is equivalent to the fact that the Nijenhuis torsion of $N$ takes only vertical values, i.e., values in $\ker(\sT\zt)$. Consequences for almost complex structures on (real) Banach manifolds are also derived. As canonical examples, we define tangent lifts $\dt(N_0):\sT\sT M_0\to\sT\sT M_0$ of Nijenhuis operators $N_0$ in the Banach category, and prove that they are automatically projectable for the canonical fibrations $\zt_{M_0}:\sT M_0\to M_0$. Finally, we comment on the projectability in the case of Banach homogeneous manifolds $\zt:G\to G/K$, studied recently by some authors.

\bigskip\noindent
{\bf Keywords:}
\emph{Banach manifold; Banach Lie group; Nijenhuis tensor; almost complex structure; vector bundle; smooth fibration; tangent lift}\par

\medskip\noindent
{\bf MSC 2020: 58B12, 53C30, 32Q60, 58B20, 53C15}

\
\end{abstract}
\section{Introduction}
Many aspects of differential geometry in the infinite-dimensional Banach setting have not been systematically studied yet. One of them is the concept of a \emph{Nijenhuis operator}, a powerful tool in the theory of integrable systems \cite{Magri:1978,Magri:1984}. This theory in the infinite-dimensional Banach setting is substantially more complicated, as many standard methods and proofs do not work. An infinite-dimensional version of the Newlander-Nirenberg Theorem was originated in \cite{Beltita:2005}, and Nijenhuis operators on Banach homogeneous spaces $G/K$ were recently studied in \cite{Golinski:2025}, which was an inspiration for our work.

The authors study in \cite{Golinski:2025} the following question: what $G$-invariant Nijenhuis operators $N_0$ on a Banach homogeneous space $M_0=G/K$ come from invariant operators on the (real) Banach Lie group $G$ \emph{via} the canonical projection $\zt:G\to G/K$? It is easy to see that, due to the homogeneity, everything reduces to the Lie algebra $\g$ of $G$ and to the study of the Nijenhuis torsion in the Lie algebra sense.

We realized that this question has a more general geometric context, and proved an analogous result, skipping the homogeneity assumption, and working with general fibrations $\zt:M\to M_0$ replacing the principal bundle $\zt: G\to G/K$.

\mn Let us recall that a Nijenhuis operator on a Lie algebra $\g$ is a linear map $N:\g\to\g$ whose Nijenhuis torsion,
$$\cT_N(X,Y)=[NX,NY]-N\big([NX,Y]+[X,NY]-N[X,Y]\big)$$
vanishes. This concept was introduced by Nijenhuis in \cite{Nijenhuis:1951}, and the condition $\cT_N=0$ easily implies that the `contracted' bracket,
\be\label{contr}[X,Y]_N=[NX,Y]+[X,NY]-N[X,Y],\ee
is again a Lie bracket.

\mn In differential geometry, one considers \emph{Nijenhuis tensors} as $(1,1)$ tensors $N$ on a manifold $M$, representing a vector bundle morphism $N:\sT M\to \sT M$ and the Nijenhuis torsion is defined in terms of the Lie bracket of vector fields. A fundamental observation in this context is the fact that the Nijenhuis torsion, defined originally on vector fields, is, in fact, represented by a $(1,2)$-tensor,
$$\cT_N:\we^2\sT M\to\sT M.$$
This framework can be extended to \emph{Lie algebroids}, where we replace the bracket of vector fields with a bracket on sections of a vector bundle. Moreover, we can consider the compatibility of Nijenhuis tensors with other geometric structures, to mention Poisson-Nijenhuis structures \cite{Grabowski:1997a,Kosmann:1990}. Note that Nijenhuis structures for more general algebraic structures on vector bundles have been discussed in \cite{Carinena:2001}.

\mn Especially important is the case of \emph{almost complex structures}, i.e., $(1,1)$ tensors $N:\sT M\to\sT M$ satisfying $N^2=-\id$. The celebrated Newlander-Nirenberg Theorem \cite{Newlander:1957} says that, in this case, $\cT_N$ vanishes exactly when the almost complex structure arises from a genuine complex structure on $M$.

This theory in the infinite-dimensional Banach setting looks substantially more complicated, as many standard methods do not work, and global vector fields on a given Banach manifold may not exist. The starting point is proving that $\cT_N$ is, actually, tensorial, i.e., it is a vector bundle map
$$\cT_N:\sT M\we_M\sT M\to\sT M.$$
This proof is nontrivial in infinite dimensions.

\mn Following \cite{Golinski:2025}, we consider a Banach manifold $M$ and \emph{$\cN$-operators} $N:\sT M\to\sT M$, but we generalize the observations done for fiber bundles $\zt:G\to G/K$ to the case of arbitrary Banach fibrations $\zt:M\to M_0$. In particular, we define \emph{projectable $\cN$-operators} in this setting, and show that if $N$ is projectable onto $N_0:\sT M_0\to\sT M_0$, then the projected operator $N_0$ is Nijenhuis if and only if $\cT_N$ is \emph{vertical}, i.e., it takes only vertical values,
$$\cT_N:\sT M\we_M\sT M\to \sV,$$
where $\sV=\ker(\sT\zt)\subset\sT M$ is the vertical subbundle. The particular case of almost complex structures, $N^2=-\id$, is also studied.

\mn Entirely novel observation is that the tangent (complete) lifts produce projectable Nijenhuis operators $\dt(N)$ on $\zt_M:\sT M\to M$ from Nijenhuis ones $N$ on $M$. In finite dimensions, a more general version says  \cite{Grabowska:2022} that the complete lifts of vector-valued forms on $M$ to the higher tangent bundles $\sT^kM$ respect the Fr\"olicher-Nijenhuis bracket $[\cdot,\cdot]_{FN}$. Note that for $\cN$-operators (vector valued 1-forms) we have $\cT_N=\half[N,N]_{FN}$. We do not have an analogous result for infinite-dimensional vector spaces. Still, we define the tangent lift $\dt$ of $\cN$-operators in infinite dimensions and show that $\dt(N)$ is Nijenhuis and projectable for $N$ being Nijenhuis.

\mn Finally, we show how to derive the main results of \cite{Golinski:2025} in the homogeneous case from our general setting.

\section{Vector bundles in the Banach category}
We will work in the category of smooth real Banach manifolds \cite{Dodson:2016,Hamilton:1982,Lang:1999}, using the standard \emph{Bastiani differentiability} on Banach spaces:

\mn Let $E,F$ be (real) Banach spaces and \( f : E \to F \) be a smooth ($\Ci$) map.
    \begin{itemize}
      \item The derivative \( D_pf \in {B}(E, F) \) at $p\in E$ is a bounded linear map.
      \item Higher derivatives \( D^k_p f \in {B}^k(E; F) \) are bounded $k$-linear maps.
    \end{itemize}

\begin{definition} \emph{Banach vector bundles} $\zt:V\to M$ are locally trivial fibrations with a typical fiber being a Banach space $F$ such that the transition maps take values in $B(F)$. \emph{Morphisms} of vector bundles (VB-morphisms) are smooth fibration morphisms which are bounded linear maps on fibers.
\end{definition}
\no Locally, we can view $V$ as $E\ti F$, so smooth sections $X$ of $\zt$ can be locally viewed as smooth maps $X:E\to F$. The space of smooth sections $\Sec(V)$ is canonically a module over the algebra $\Ci(M)$ of smooth functions on $M$. Vector bundle morphisms are locally represented by smooth maps
$$\zF:E\ti F\to E'\ti F', \quad \zF(p,y)=\bl\zF_0(p),\zF_p(y)\br,$$
where $\zF_0:E\to E'$ is a smooth map and $\zF_p\in B(F,F')$.

\mn A canonical example is the tangent bundle $\zt_M:\sT M\to M$, whose space $\cX(M)$ of sections consists of vector fields. If $M$ is modelled on a Banach space $E$, then, locally, $\sT M=E\ti E$, and vector fields are just smooth maps $X:E\to E$. The derivative $D_xf$ of a smooth map $f:M\to M'$ between manifolds  modeled on Banach spaces $E$ and $F$ is now interpreted as a map
$$D_xf:\sT_xM\to\sT_{f(x)}M'.$$
The definition is consistent, since local derivatives $D_xf:E\ti E\to F\ti F$ transform properly with respect to transition maps.

\mn Note, however, that not all Banach manifolds, even second countable, admit smooth `bump functions', so the existence of global sections of a vector bundle is not guaranteed. For instance, global smooth functions or global smooth vector fields could be only trivial ones. Therefore, in our definitions of geometrical objects and the corresponding considerations, we will work exclusively locally, which will be fully sufficient for our purposes. Therefore, in our proofs, we can assume that our manifolds are just (real) Banach spaces.

Note that on the space $\cX(M)$ of smooth vector fields, a canonical Lie algebra bracket. If $M$ is modelled on a Banach space $E$, then, locally, then $\sT M=E\ti E$, and vector fields are just smooth maps $X:E\to E$, and the bracket of vector fields reads
\be\label{br}[X,Y]_p=D_pX(Y_p)-D_pY(X_p).\ee
It is also easy to see that
\be\label{Liea}
[X,fY]=f[X,Y]+X(f)Y,
\ee
for any function $f$ on $M$, which is usually expressed as the fact that
$$\ad_X:\cX(M)\to\cX(M),\quad \ad_X(Y)=[X,Y]$$
is a VB-derivation, and that $\sT M$ is a \emph{Lie algebroid}.

\mn Starting with a vector bundle, we can canonically construct other vector bundles, namely \emph{jet bundles} of their (local) sections. It reduces to the case of trivial vector bundles $V=E\ti F$, for which the \emph{$k$-th jet bundle} is
$$
\sJ^k(E) = E \times\bl F \times {B}(E, F) \times \cdots \times {B}^k(E, F)\br.
$$
The fibers of these jet bundles are naturally Banach manifolds, so fibers of the infinite jet bundle have a natural Fr\'echet topology. Any $X\in\Sec(V)$ induces canonically a smooth section $\sj^k(X)$ of $\sJ^k(V)$, by
$$\sj^k_p(X)=\bl X_p,D_pX,\cdots,D^k_pX\br.$$
On the vector space $\Sec(V)$ we have a natural topology, called the \emph{Whitney $\Ci$-topology} (cf. \cite{Dodson:2016,Hamilton:1982}). In the literature, one can find other natural topologies, e.g., point-wise $C^\infty$-convergence, or the $c^\infty$-topology, playing a fundamental role in the so-called \emph{convenient setting} for differential calculus \cite{Kriegl:1997}.

\mn Now, we can easily define linear differential operators between sections of two Banach vector bundles over the same base $M$.
\begin{definition}
An $\R$-linear map $\zf:\Sec(V)\to\Sec(V')$ we call a \emph{linear differential operator of degree $\le k$} if $\zf$ comes from a VB-morphism $\zF:\sJ^k(V)\to V'$ by
$$\zf(X)_p=\zF\bl\sj^k_p(X)\br.$$
In particular, $\zf$ is of degree 0 if and only if it comes from a VB-morphism $\zF:V\to V'$ covering the identity on $M$.
\end{definition}
\no As already mentioned, this definition makes sense when we use local sections only, and the above definition is correct, as it is invariant with respect to the transition maps. In particular, the adjoint map associated with the Lie bracket of vector fields, $\ad_X(Y)=[X,Y]$, is a first-order differential operator.

\begin{remark} Any Banach VB-morphism $\zF:V\to V$ covering the identity on $M$ induces a $\Ci(M)$-linear map $\zf:\Sec(V)\to\Sec(V)$ by
\be\label{sct} \zf(X)_x=\zF(X_x).\ee
For any function $f$ on $M$, we have $\zf(fX)=f\zf(X)$. If $\dim(F)=n<\infty$, the converse is also valid: if $\zf:\Sec(V)\to\Sec(V)$ is $\Ci(M)$-linear, then $\zf$ is of the form (\ref{sct}) for a uniquely defined VB-morphism $\zF$. This is because if $\zf$ is $\Ci(M)$-linear, and $(e_i)$ is a basis of $F$, then in a trivialization $E\ti F$ we have
\be\label{vbm}\Phi=\ze^i\ot\zf(e_i),\ee
where $(\ze^i)$ is the dual basis of $(e_i)$.

Of course, the construction (\ref{vbm}) of $\zF$ from $\zf$ fails in infinite dimensions, where a continuous $C^\infty(M)$-linear operator $\zf:\Sec(V)\to\Sec(V)$ need not to be a tensor. This means that we cannot use just the module structure on $\Sec(V)$ to check tensoriality. At the beginning, the problem starts from a possible lack of nontrivial global sections, i.e., the fact that $\Sec(V)$ could be $\{0\}$, caused by the lack of partitions of unity. We solve this problem by working with local sections. For many interesting vector bundles, e.g., tangent bundles of Banach Lie groups, global sections span every fiber, so one can work exclusively with such vector bundles. Another possible approach is \emph{via} sheaves of local sections of $V$, and VB-morphisms replaced with sheaves of VB-morphisms. We will not discuss such aspects here and work with the general case, not insisting on the existence of global sections. Note, however, that even locally, working with the $\Ci(M)$-module structure on the space of sections is not enough.
\end{remark}
\begin{example}\label{Vin}
Recall that, if $M=E$ is just a Banach space, the tangent bundle reads $\sT E=E\ti E$, and any vector field $X\in\cX(E)$ is represented by a smooth map $X:E\to E$.

\mn Let us assume that the Banach space $E$ is infinite-dimensional. In the Banach algebra $B(E)$, there is a unique nontrivial closed ideal, $\cK(E)$, consisting of compact operators. Let $\zm\in B(E)^*$ be a non-zero functional on $B(E)$ vanishing on $\cK(E)$ and $Y$ be a fixed vector field on $E$. Consider now $\zf:\cX(E)\to\cX(E)$ given by
$$\zf(X)(p)=\zm\bl D_pX\br\cdot Y_p\,.$$
It is clear that $\zf:\cX(E)\to\cX(E)$ is continuous in any reasonable $\Ci$-topology and it does not come from a tensor $\zF:\sT E\to\sT E$, since it makes use of the derivative of $X$. In other words, $\zf$ is a first-order linear differential operator represented by a vector bundle morphism
$$\zF:\sJ^1(\sT E)\to\sT E.$$
What is surprising is the fact that, nevertheless, $\zf$ is $C^\infty(E)$-linear. Indeed, let $f$ be a smooth function on $E$. Since $D(fX)=fD(X)+X\ot\xd f$, for each $p\in E$, we have
$$\bl \zf(fX)-f\zf(X)\br(p)=\zm\bl X_p\ot\xd f(p)\br\cdot Y_p=0,$$
as $X_p\ot\xd f(p)$ is a rank 1, thus a compact operator.

\end{example}

\begin{remark} In finite dimensions, any $\R$-linear operator $\zf:\Sec(V)\to\Sec(V')$ between sections of vector bundles $V$ and $V'$ over the same manifold $M$, which commutes with the multiplication by functions on $M$, $[\zf,f](Y)=\zf(fY)-f\zf(Y)=0$ for any function $f\in\Ci(M)$ and any section $Y$ of $E$, is automatically a VB-morphism (cf. (\ref{vbm})). Inductively: if $[\zf,f]$ is a linear differential operator of degree $\le k$ for all $f\in\Ci(M)$, then $\zf$ is a linear differential operator of degree $\le k+1$ \cite{Vinogradov:1975}. This is the starting point of the powerful Vinogradov's algebraic approach to differential geometry \cite{Nestruev:2003}. Example \ref{Vin} shows that this approach fails for infinite-dimensional Banach manifolds.
\end{remark}

\section{$\cN$-operators on Banach manifolds}
Let $M$ be a Banach manifold modeled on a Banach space $E$, let $\zt_M:\sT M\to M$ be its tangent bundle, and $\cX(M)$ be the Lie algebra of vector fields on $M$. Here, we regard $\sT M$ as consisting of \emph{velocity vectors} \cite{Dodson:2016}, and we identify locally vector fields on $M$ as smooth maps $X:E\to E$.
\begin{definition} An \emph{$\cN$-operator} on $M$ is a vector bundle morphism
$$N:\sT M\to\sT M$$
covering the identity on $M$. The \emph{Nijenhuis torsion} of $N$ in this setting is an $\R$-bilinear anti-symmetric map
$$\cT_N:\cX(M)\ti\cX(M)\to\cX(M),$$
defined by
\be\label{Nt}\cT_N(X,Y)=[NX,NY]-N\big([NX,Y]+[X,NY]-N[X,Y]\big).\ee
\end{definition}
This definition works well locally, so we can use local vector fields $X,Y$ if we do not have global ones at our disposal. In finite dimension, $\cN$-operators are usually identified with $(1,1)$-tensor fields.
It is known \cite[Theorem 3.2]{Golinski:2025} that the Nijenhuis torsion of $N$ at $p\in M$ depends only on the values of the vector fields at the point $p$, i.e.,
$$\big[\cT_N(X,Y)\big]_p=(\cT_N)_p(X_p,Y_p)$$
for some bilinear and skew-symmetric bounded operator
$$(\cT_N)_p:\sT_pM\ti\sT_pM\to \sT_pM.$$
In other words, $\cT_N$ is a tensor. Working locally, we can put $M=E$ and prove that $\big[\cT_N(X,Y)\big]_p=0$ if $X_p=0$. 

\mn It is also well known that vanishing of $\cT_N$ implies that the `contracted' bracket of vector fields (\ref{contr}) is a \emph{Lie algebroid} bracket (cf. \cite{Anastasiei:2011}) on the vector bundle $\sT M$, with the anchor $N:\sT M\to\sT M$.

\mn Since $\cT_N$ is a tensor, we can easily find its explicit form (cf. \cite[Theorem 3.2]{Golinski:2025}),
\be\label{TN} \cT_N(X,Y)=DN(X,NY)-DN(Y,NX)-N\bl DN(X,Y)-DN(Y,X)\br,
\ee
assuming that $X,Y$ are constant ($DX=DY=0$), that simplifies calculations. Here, we interpret locally:
\begin{align*}
& N:E\to B(E;E)=B(E);\\
& DN:E\to B\bl E;B(E)\br=B^2(E;E).
\end{align*}
$\cN$-operators $N$ with vanishing torsion, $\cT_N=0$, are called \emph{Nijenhuis operators}.

\mn Recall that \emph{almost complex structures} are understood as $\cN$-operators $N$ satisfying $N^2=-\id$. Similarly, $\cN$-operators satisfying $N^2=\id$ are called \emph{almost product structures}, and those satisfying $N^2=0$ -- \emph{almost tangent structures}. If $N$ is additionally Nijenhuis, we deal (cf. \cite{Grabowski:2006}) with \emph{complex structures},    \emph{product structures}, and \emph{tangent structures}, respectively.

\mn  For an $\cN$-operator $N$ on $M$, on the complexified tangent bundle $\sT^\C M$ we have
\be\label{complex}[X+iNX,Y+iNY]=[X,Y]-[NX,NY]+i\bl [NX,Y]+[X,NY]\br,
\ee
so the vector subbundle $Z_+(N)$ in $\sT^C M$ spanned by vectors $X+iNX$ is involutive if and only if
\be\label{7}N\bl[X,Y]-[NX,NY]\br=[NX,Y]+[X,NY].\ee
Replacing $N$ with $-N$ we get a similar condition for $Z_+(-N)=Z_-(N)$. If, additionally, $N$ is almost complex, $N^2=-\id$, (\ref{complex}) is equivalent to
$$\cT_N(X,Y)=[NX,NY]-N\bl[NX,Y]+[X,NY]\br-N^2[X,Y]=0.$$
In other words, exactly like in finite dimensions, an almost complex structure $N$ is complex, $\cT_N=0$, if and only if the complex distribution $Z_+\subset\sT^\C M$ (equivalently, $Z_-$) is involutive.

Note that $Z_+(N)$ can be viewed as the kernel of the complex $\cN$-operator $J_N-i\cdot\id$, where
$$J_N=N^\C:\sT^\C M\to\sT^\C M,\quad J_N(X+iY)=NX+iNY.$$

\section{The case of Banach fibrations}
Let now $\zt:M\to M_0$ be a \emph{fibration}, i.e., $\zt$ is a smooth surjective submersion of Banach manifolds. We are interested in a condition assuring that an $\cN$-operator $N$ on $M$ induces canonically an $\cN$ operator $N_0$ on $M_0$. In other words, the operator $N_0$ is the one making the following diagram commutative,
$$\xymatrix@C+30pt@R+20pt{
\sT M\ar[r]^{N}\ar[d]^{\sT\zt}&\sT M\ar[d]^{\sT\zt}\\
\sT M_0\ar[r]^{N_0}&\sT M_0\,,}
$$
i.e.,
$$
N_0\circ\sT\zt=\sT\zt\circ N.
$$
More precisely,
\be\label{Nproj}N_0([Y_m])=[N(Y_m)],
\ee
where, for simplicity, we write $[X_m]$ for $\sT_m\zt(X_m)$.

Such $\cN$-operators on the total space of the fibration we will call \emph{projectable}. In particular, $N$ maps the \emph{vertical subbundle} $\sV=\ker(\sT\zt)$ of $\sT M$ into $\sV$. We do not assume any complementary conditions for $\sV$: $\sV_m$ may not admit any complementary Banach subspace in $\sT_mM$. Differentiating condition (\ref{Nproj}), we get
\be\label{Nproj1}
DN_0\bl[X_m],[Y_m]\br=[DN(X_m,Y_m)].
\ee
Hence, according to (\ref{TN}), (\ref{Nproj}), and (\ref{Nproj1}),
\beas &&\cT_{N_0}\bl[X_m],[Y_m]\br= \\
&&= DN_0\bl[X_m],N_0[Y_m]\br-DN_0\bl[Y_m],N_0[X_m]\br
-N_0\Bl DN_0\bl[X_m],[Y_m]\br-DN_0\bl[Y_m],[X_m]\br\Br\\
&&=\sT\zt\Bl DN\bl X_m,N(Y_m)\br-DN\bl Y_m,N(X_m)\br-N\bl DN(X_m,Y_m)-DN(Y_m,X_m)\br\Br\\
&&=\sT\zt\bl \cT_N(X_m,Y_m)\br.
\eeas
Since $\sT\zt$ maps $\sT M$ onto $\sT M_0$, the above identity implies our first result.
\begin{theorem}\label{main}
If $N$ is an $\cN$-operator on a fibration $\zt:M\to M_0$ which is projectable onto an $\cN$-operator $N_0$ on $M_0$, then $N_0$ is Nijenhuis if and only if $\cT_N$ is vertical, i.e., $\sT\zt\circ \cT_N=0$.
\end{theorem}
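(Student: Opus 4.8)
The plan is to reduce both implications to the single pointwise identity
$$\cT_{N_0}\bl[X_m],[Y_m]\br=\sT\zt\bl\cT_N(X_m,Y_m)\br,$$
obtained in the computation immediately preceding the statement, and then to invoke the surjectivity of $\sT\zt$ on fibres. Since $\cT_N$ is tensorial by \cite[Theorem 3.2]{Golinski:2025} (as recalled above), I may argue pointwise and locally, replacing $M$ by $E$ and $M_0$ by $E_0$; moreover I am free to evaluate on constant vector fields $X,Y$, so that the explicit formula (\ref{TN}) applies with all the $DX,DY$ contributions absent.

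To re-derive the identity I would first record the infinitesimal form of projectability: differentiating the defining relation $N_0\circ\sT\zt=\sT\zt\circ N$ of (\ref{Nproj}) yields (\ref{Nproj1}), that is $DN_0\bl[X_m],[Y_m]\br=[DN(X_m,Y_m)]$, so the first-jet data of $N_0$ is the $\sT\zt$-image of that of $N$. Feeding (\ref{Nproj}) and (\ref{Nproj1}) into the closed expression (\ref{TN}) for $\cT_{N_0}$, and using that $\sT\zt$ is fibrewise linear and intertwines $N$ with $N_0$, each of the four terms of $\cT_{N_0}\bl[X_m],[Y_m]\br$ becomes the $\sT\zt$-image of the matching term of $\cT_N(X_m,Y_m)$, which collapses to the displayed identity.

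Granting the identity, the equivalence is immediate. If $\cT_N$ is vertical then $\sT\zt\circ\cT_N=0$, so the right-hand side vanishes for all $X_m,Y_m$; because $\zt$ is a submersion, $\sT_m\zt:\sT_mM\to\sT_{\zt(m)}M_0$ is onto for every $m$, hence every ordered pair of tangent vectors over a point of $M_0$ has the form $\bl[X_m],[Y_m]\br$, whence $\cT_{N_0}=0$ and $N_0$ is Nijenhuis. Conversely, if $N_0$ is Nijenhuis the left-hand side is identically zero, so $\sT\zt\bl\cT_N(X_m,Y_m)\br=0$ for all $X_m,Y_m$, i.e. $\cT_N$ takes values in $\sV=\ker(\sT\zt)$.

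The one step that requires care is the simultaneous lifting used in the forward direction: I must lift a whole ordered pair of vectors at $m_0\in M_0$ through $\sT\zt$ at a common point. This is exactly the surjectivity of each $\sT_m\zt$ coming from $\zt$ being a submersion; fixing any $m\in\zt^{-1}(m_0)$ and taking preimages of the two vectors suffices, and --- crucially in the Banach setting --- no complement of $\sV_m$ in $\sT_mM$ is required. The substantive hypothesis is thus the tensoriality of $\cT_N$, imported from \cite{Golinski:2025}, which alone legitimizes the pointwise evaluation on constant fields.
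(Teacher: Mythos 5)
Your proposal is correct and follows essentially the same route as the paper: differentiate the projectability relation to obtain (\ref{Nproj1}), substitute (\ref{Nproj}) and (\ref{Nproj1}) into the tensorial formula (\ref{TN}) to get the identity $\cT_{N_0}\bl[X_m],[Y_m]\br=\sT\zt\bl\cT_N(X_m,Y_m)\br$, and conclude both implications from the fibrewise surjectivity of $\sT\zt$. Your added remark that pairs of vectors must be lifted at a common point $m\in\zt^{-1}(m_0)$, with no complement of $\sV_m$ needed, is a correct and slightly more explicit rendering of the step the paper passes over with ``Since $\sT\zt$ maps $\sT M$ onto $\sT M_0$.''
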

\mn If complex structures are concerned, a projectable $\cN$-operator $N$ on a Banach fibration $\zt:M\to M_0$ projects onto a complex structure $N_0$ if and only if $N^2+\id$ and $\cT_N$ take only vertical values.
\begin{theorem}
A projectable $\cN$-operator $N$ on a Banach fibration $\zt:M\to M_0$ projects onto a complex structure $N_0$ if and only if $N^2+\id$ take only vertical values, and the complex distribution $\hat Z_+\subset\sT^\C M$,
(equivalently, $\hat Z_-$) is involutive. Here,
$$\hat Z_\pm=\{X+iY\in\sT^\C M\,\big|\, Y\mp NX\in V\}.$$
\end{theorem}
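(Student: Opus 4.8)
The plan is to split a complex structure on $M_0$ into its two defining features, $N_0^2=-\id$ and $\cT_{N_0}=0$, and to match them separately with the two conditions in the statement.

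I would first dispose of the algebraic part. Composing the projectability relation $\sT\zt\circ N=N_0\circ\sT\zt$ with itself gives $N_0^2\circ\sT\zt=\sT\zt\circ N^2$, hence $(N_0^2+\id)\circ\sT\zt=\sT\zt\circ(N^2+\id)$; since $\sT\zt$ is fibrewise onto, $N_0^2=-\id$ is equivalent to $N^2+\id$ taking only vertical values. From here on I assume this, so that $N_0^2=-\id$ and $N_0$ is invertible with $N_0^{-1}=-N_0$. A short chase of definitions then identifies $\hat Z_\pm$ as a pullback: the complexified $\sT\zt$ sends $X+iY$ to $[X]+i[Y]$, which lies in $Z_+(N_0)=\{u+iN_0u\}$ exactly when $[Y]=[NX]$, i.e. $Y-NX\in\sV$; thus $\hat Z_\pm=(\sT\zt)^{-1}\bl Z_\pm(N_0)\br$. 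Writing any local section of $\hat Z_+$ as $s_X+iw$ with $s_X:=X+iNX\in Z_+(N)$ and $w\in\sV$, and recalling that $N$ preserves $\sV$ (so that $\sV^\C\subseteq\hat Z_+$), I see that $\hat Z_+$ is generated by the sections $s_X$ together with $\sV^\C$.

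The heart of the proof is the involutivity of $\hat Z_+$, which by this splitting reduces to three bracket types. Since $\sV$ is involutive and $\sV^\C\subseteq\hat Z_+$, brackets of vertical fields stay in $\hat Z_+$. For a mixed bracket $[s_X,w]$ with $w\in\sV$, the chain rule together with $\sT\zt\circ w=0$ gives $\sT\zt[s_X,w]=D(\sT\zt\circ s_X)(w)$; writing $\sT\zt\circ s_X=a+iN_0a$ with $a=\sT\zt\circ X$ and noting that differentiating $N_0$ in the vertical direction $w$ contributes nothing (because $\sT\zt(w)=0$), this equals $v+iN_0v$ with $v=Da(w)$, so $[s_X,w]\in\hat Z_+$. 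Everything therefore rests on the brackets $[s_X,s_Y]$. By (\ref{complex}) we have $[s_X,s_Y]=R+iI$ with $R=[X,Y]-[NX,NY]$, $I=[NX,Y]+[X,NY]$, and $[s_X,s_Y]\in\hat Z_+$ precisely when $I-NR\in\sV$. A direct manipulation of the definition (\ref{Nt}) produces the identity
\[N(I-NR)=(N^2+\id)[NX,NY]-\cT_N(X,Y).\]

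The subtle point, which I regard as the main obstacle, is that this identity controls only $N(I-NR)$, while $N$ itself is not invertible; one cannot read off the verticality of $I-NR$ from it directly, because the corresponding finite-dimensional integrability computation tacitly relies on the invertibility of an almost complex operator, which here holds only after projecting to $M_0$. The remedy is to descend to $M_0$, where $N_0$ is invertible: applying $\sT\zt$, using $N_0^2=-\id$ to annihilate the $(N^2+\id)$-term and the tensorial formula $\cT_{N_0}([X],[Y])=\sT\zt\,\cT_N(X,Y)$ from the proof of Theorem~\ref{main}, I obtain $N_0\,\sT\zt(I-NR)=-\cT_{N_0}([X],[Y])$, and therefore $\sT\zt(I-NR)=N_0\,\cT_{N_0}([X],[Y])$. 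As $N_0$ is invertible and $\sT\zt$ is onto, $I-NR\in\sV$ for all $X,Y$ if and only if $\cT_{N_0}=0$. Assembling the three bracket types, $\hat Z_+$ is involutive if and only if $\cT_{N_0}=0$; together with $N_0^2=-\id$ this is exactly the statement that $N_0$ is a complex structure, and by Theorem~\ref{main} it is moreover equivalent to $\cT_N$ being vertical. The claim for $\hat Z_-$ follows by running the same argument for $-N$, which projects onto $-N_0$ and satisfies $\cT_{-N}=\cT_N$.
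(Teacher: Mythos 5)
Your proof is correct and follows essentially the same route as the paper's: identify $\hat Z_\pm$ as preimages of $Z_\pm(N_0)$ under $(\sT\zt)^\C$, dispose of the algebraic condition via surjectivity of $\sT\zt$, and check involutivity on the three bracket types modulo the vertical subbundle. The only difference is one of detail: where the paper says one ``easily proves'' (\ref{7}) modulo $\sV$, you make the key identity explicit and correctly observe that one must descend to $M_0$ and invert $N_0$ rather than $N$ --- a genuine subtlety the paper glosses over.
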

\begin{proof}
It is easy to see that $\hat Z_\pm$ are the inverse images of
$$Z_\pm=\{X+iN_0X\in\sT^\C M_0\,\big|\, X\in\sT M_0\}$$ under the projection
$$(\sT\zt)^\C:\sT^\C M\to\sT^\C M_0.$$
Any local complex vector field $X_0+iN_0X_0$ on $M_0$ is the image of a local projectable vector field $X+iNX$ on $M$. As $X+iNX$ is a local section of $\hat Z_+$, and the Lie bracket of projectable vector fields projects onto the Lie bracket of projections. The involutivity of $\hat Z_\pm$ implies the involutivity of $Z_\pm$, so $N_0$ is complex.

Conversely, suppose $N_0$ is a complex structure. Then, according to Theorem \ref{main}, $N^2+\id$ and $\cT_N$ take only vertical values. Let $X+iX_1$ and $Y+iY_1$ be local sections of $\hat Z_+$, so $Z_1=X_1-X,Z_2=Y_1-Y$ are vertical. As $N$ is projectable, $N$ respects the vertical subbundle $\sV$, $N(\sV)\subset\sV$, and we get from (\ref{Nt}) for $X$ being vertical,
$$[X,NY]-N[X,Y]\in \sV$$
for any vector field $Y$ on $M$. This easily implies that
$$[X,Y\pm iNY]=[X,Y]\pm i[X,NY]=[X,Y]\pm iN[X,Y]$$
modulo $\sV$, i.e.,
\be\label{h1}
[V,\hat Z_\pm]\subset\hat Z_\pm.
\ee
Of course, $[\sV,\sV]\subset\sV$. Now, taking into account that local sections of $\hat Z_+$ are of the form $X+iNX$ modulo $\sV$, and doing calculations modulo $\sV$, we easily prove that (\ref{7}) is satisfied modulo $V$, so $\hat Z_+$ is involutive.
\end{proof}
\section{Lifting Nijenhuis operators}
An interesting class of projectable Nijenhuis operators we get from the procedure of the \emph{tangent lift}. In finite dimensions, it is well known that tensor fields on a manifold $M$ can be canonically lifted to the tangent bundle $\sT M$ \cite{Grabowski:1995,Ishihara:1973, Yano:1973}. This procedure also works for Lie algebroids, replacing the canonical $\sT M$ \cite{Grabowski:1997,Grabowski:1997a}.

In infinite dimensions, tensor fields are generally not linear combinations of simple ones, so the situation is more complicated. However, for vector fields and Nijenhuis operators, the method described in \cite{Grabowski:1995} works perfectly. It is based on the \emph{canonical flip}
$$\zk:\sT\sT M\to\sT\sT M,$$
which is an isomorphism of the two canonical vector bundle structures on $\sT\sT M$. To define  $\zk$, consider a `homotopy'
$$\phi:\R^2\to M,\quad \phi(0,0)=x_0.$$
For each fixed $s\in\R$, the curve $t\mapsto \phi(t,s)$ represents a tangent vector
$$\phi'_t(s)\in\sT_{\phi(0,s)}M,\quad \phi'_t(s)=\frac{\pa}{\pa t}\,\Big|_{t=0}\phi(t,s),$$
thus a curve
$$\phi'_t:\R\to\sT M.$$
The tangent vector $\phi''_{s,t}$ to this curve at $s=0$ is an element of
$\sT_{\phi'_t(0)}\sT M$. Intertwining the order of differentiation, i.e., considering $\phi(s,t)$ instead of $\phi(t,s)$, we get an element $\phi''_{t,s}$ of $\sT_{\phi'_s(0)}\sT M$, and the association
$$\phi''_{s,t}\mapsto\phi''_{t,s}$$
defines the diffeomorphism $\zk$. Of course, $\zk^2=\id$.

In a local trivialization, we can replace $M$ with the model Banach space $E$, and $\sT\sT M$ with
$E\ti E\ti E\ti E$, whose elements we will denote
$$\bl x,\dot x,\zd x,\zd\dot x\br,$$
and view as our `local coordinates'.
With these identifications, it is easy to see that $\zk$ is really a flip,
$$\zk\bl x,\dot x,\zd x,\zd\dot x\br=\bl x,\zd x,\dot x,\zd\dot x\br.$$

\mn Still working locally, we can view a vector field as a map
$$X:E\to \sT E=E\ti E,\quad X(x)=(x,X_x).$$
Its tangent prolongation, $\sT X: \sT E\to\sT\sT E$, reads
$$\sT X(x,\dot x)=\bl x,X_x,\dot x,D_xX\br.$$
After composing with $\zk$, we get the map
$$\dt(X)=\zk\circ\sT X:\sT E\to\sT\sT E,$$
which, in fact, is a vector field on $\sT E$,
$$\dt(X)(x,\dot x)=\bl x,\dot x,X_x,D_xX\br.$$
All this extends to manifolds, and the vector field $\dt(X)$ is called the \emph{tangent lift} (or the \emph{complete lift}) of the vector field $X$.

Integral curves of $\dt(X)$ are tangent prolongations of integral curves of $X$, so $\dt(X)$ is nothing but the generator of the (local) flow $\sT\zf^t_X$, where $\zf^t_X$ is the (local) flow of $X$. From that, or by a direct inspection in `local coordinates', one can prove that the tangent lift respects the Lie bracket of vector fields  (cf. \cite{Grabowski:1995}),
\be\label{dt} \big[\dt(X),\dt(Y)\big]=\dt\big([X,Y]\big).
\ee
The tangent lift can also be defined for an Nijenhuis operator $N:\sT M\to\sT M$. Again, using the tangent functor, we get $\sT N:\sT\sT M\to\sT\sT M$. This is not a Nijenhuis operator on $\sT N$, but $\dt(N)=\zk\circ\sT N\circ\zk$ is. In our `local coordinates',
$$\dt(N)\bl x,\dot x,\zd x,\zd\dot x\br=\bl x,\dot x,N_x(\dot x),D_xN(\zd x,\dot x)+N_x(\zd\dot x)\br.$$
If $X$ is a vector field on $M$, by a straightforward check, we get
\be\label{dtN} \dt(N)\bl\dt(X)\br=\dt(NX).\ee
Combining (\ref{dt}) and (\ref{dtN}), we get
$$\cT_{\dt(N)}\bl\dt(X),\dt(Y)\br=\dt\bl\cT_N(X,Y)\br.$$
For any $v\in \sT M$, the vectors of the form $\dt(X)_v$ fill the entire tangent space $\sT_v\sT M$. Moreover, as easily seen, the fibration $\sT\zt_M:\sT \sT M\to\sT M$ projects the Nijenhuis operator $\dt(N)$ onto $N$. This proves the following.
\begin{theorem}
If $N:\sT M\to\sT M$ is a Nijenhuis operator on a Banach manifold $M$, then its tangent lift $\dt(N)$ is a Nijenhuis operator on $\sT M$. Moreover, $\dt(N)$ is projectable with respect to fibration $\sT\zt_M:\sT\sT M\to\sT M$ onto the operator $N$,
$$\xymatrix@C+30pt@R+20pt{
\sT\sT M\ar[r]^{\dt(N)}\ar[d]^{\sT\zt_M}&\sT\sT M\ar[d]^{\sT\zt_M}\\
\sT M\ar[r]^{N}&\sT M\,.}
$$
\end{theorem}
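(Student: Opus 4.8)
The plan is to reduce everything to the two lifting identities (\ref{dt}) and (\ref{dtN}) already at hand, and then to upgrade an identity valid on complete lifts to an identity of tensors. First I would record that $\dt(N)=\zk\circ\sT N\circ\zk$ is a genuine $\cN$-operator on the Banach manifold $\sT M$: since $\sT N$ is a vector bundle morphism for the prolonged structure $\sT\zt_M:\sT\sT M\to\sT M$ and $\zk$ interchanges this structure with the tangent projection $\zt_{\sT M}:\sT\sT M\to\sT M$, the composite $\dt(N)$ is a VB-morphism covering the identity of $\sT M$ over $\zt_{\sT M}$. Hence its torsion $\cT_{\dt(N)}$ is well defined by (\ref{Nt}), and by the tensoriality of the Nijenhuis torsion established above (applied to the manifold $\sT M$), $\cT_{\dt(N)}$ is a bounded skew-symmetric $(1,2)$-tensor on $\sT M$.

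Next I would derive the master identity
$$\cT_{\dt(N)}\bl\dt(X),\dt(Y)\br=\dt\bl\cT_N(X,Y)\br$$
for all local vector fields $X,Y$ on $M$ (this is the combination recorded just before the statement). It is a purely formal substitution: inserting the lifts $\dt(X),\dt(Y)$ into (\ref{Nt}), replacing each $\dt(N)\dt(Z)$ by $\dt(NZ)$ via (\ref{dtN}) and each $[\dt(A),\dt(B)]$ by $\dt([A,B])$ via (\ref{dt}), and using the $\R$-linearity of the lift $Z\mapsto\dt(Z)$, collapses the right-hand side of (\ref{Nt}) to $\dt$ applied to the expression (\ref{Nt}) for $\cT_N(X,Y)$. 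Since $N$ is Nijenhuis, $\cT_N=0$, so $\cT_{\dt(N)}\bl\dt(X),\dt(Y)\br=0$ for every pair $X,Y$.

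It remains to pass from this to $\cT_{\dt(N)}=0$, and here tensoriality is essential: at a point $v\in\sT M$ the value $(\cT_{\dt(N)})_v$ depends only on the values at $v$ of its two arguments. The lifts $\dt(X)_v$ span $\sT_v\sT M$ at every $v=(x,\dot x)$ with $\dot x\neq0$ --- since the value $X_x$ and the derivative $D_xX$ of a vector field may be prescribed independently, the pair $\bl X_x,(D_xX)\dot x\br$ runs through all of $E\times E$ once $\dot x\neq0$ --- so the skew bilinear form $(\cT_{\dt(N)})_v$ vanishes identically for such $v$. The one delicate point is the zero section $\dot x=0$, where the complete lifts fill only the subspace $E\times\{0\}$; I would close this gap by continuity, using that $\cT_{\dt(N)}$ is a bounded tensor depending continuously on $v$ while $\{\dot x\neq0\}$ is dense in $\sT M$, so that the pointwise vanishing propagates to all of $\sT M$. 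This interplay of the infinite-dimensional tensoriality with the spanning-plus-continuity argument is the crux, and the zero-section subtlety is the only genuinely infinite-dimensional obstacle.

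Finally, projectability amounts to the relation $\sT\zt_M\circ\dt(N)=N\circ\sT\zt_M$ (the analogue of (\ref{Nproj})), which I would verify directly from the local coordinate expressions for $\dt(N)$ and $\sT\zt_M$ by a short computation. Conceptually it also follows from the observation that $\dt(X)$ is $\sT\zt_M$-related to $X$, i.e.\ $\sT\zt_M\circ\dt(X)=X\circ\zt_M$: then for $w=\dt(X)_v$ identity (\ref{dtN}) gives
$$\sT\zt_M\bl\dt(N)\,w\br=\sT\zt_M\bl\dt(NX)_v\br=(NX)_{\zt_M(v)}=N\bl\sT\zt_M\,w\br,$$
and since both $\sT\zt_M\circ\dt(N)$ and $N\circ\sT\zt_M$ are continuous and fiberwise linear over $\zt_{\sT M}$ and the lifts span, they agree on all of $\sT\sT M$. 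This shows that $\dt(N)$ is projectable onto $N$ and completes the proof.
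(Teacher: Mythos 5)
Your proposal is correct and follows essentially the same route as the paper: establish $\dt(N)\bl\dt(X)\br=\dt(NX)$ and $[\dt(X),\dt(Y)]=\dt([X,Y])$, combine them into $\cT_{\dt(N)}\bl\dt(X),\dt(Y)\br=\dt\bl\cT_N(X,Y)\br$, and then use tensoriality of the torsion together with the fact that the values of complete lifts span the tangent spaces of $\sT M$; projectability is the coordinate computation $\sT\zt_M\circ\dt(N)=N\circ\sT\zt_M$. The one point where you go beyond the paper is the treatment of the zero section: the paper asserts that the vectors $\dt(X)_v$ fill $\sT_v\sT M$ for \emph{every} $v$, whereas, as you note, at $v=(x,0)$ they only fill $E\times\{0\}$ (since the last component of $\dt(X)_v$ is $D_xX\cdot\dot x$). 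Your patch --- vanishing of the continuous tensor $\cT_{\dt(N)}$ on the dense open set $\{\dot x\neq 0\}$ propagates to all of $\sT M$ --- is valid and in fact tightens the paper's argument at its only imprecise step.
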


\section{The case of homogeneous Banach manifolds}
\no In \cite{Golinski:2025}, the authors consider the case in which $M=G$ is a real Banach-Lie group and $\zt:G\to G/K$ is the fibration associated with an embedded Banach subgroup $K$ of $G$. They are interested in those projectable $\cN$-operators $N$ on $G$ which are $G$-invariant, thus induce $G$-invariant $\cN$-operators on the homogeneous space $M_0=G/K$. The group $G$ acts transitively on $G/K$ by $g'.\zt(g)=\zt(g'g)$. The fibers of $\zt$ are of the form $gK$, where $g\in G$, and $K$ acts transitively on these fibers by the right translations, $g\mapsto gk$.

Let $\g$ and $\k$ be the Lie algebras of $G$ and $K$, interpreted as $\sT_eG$ and $\sT_eK\subset\sT_eG$, respectively. To simplify the notation, we will write $gX$ and $Xg$ for $\sT_eL_g(X)$ and $\sT_eR_g(X)$, where $X\in\g$, and $L_g,R_g:G\to G$ are the left/right translations by $g\in G$. The vertical bundle $V$ of the fibration $\zt:G\to G/K$ is, clearly, $V_g=g\k$. Indeed, if $X$ in $\k$, then the curve
$$t\mapsto\zt(g\exp(tX))=\zt(g)$$
is constant, so the tangent vector is zero. In particular, $\zp=\sT_e\zt:\g\to\g/\k$ is the canonical projection.

\mn An $\cN$-operator on $G$ is \emph{invariant} if
\be\label{G}N_g(Y_g)=gN_e(g^{-1}Y_g),\ee
for any $g\in G$ and any $Y_g\in\sT_gG$. In other words, $N$ maps left-invariant vector fields into left-invariant vector fields. Consequently, it is sufficient to calculate the Nijenhuis torsion for left-invariant vector fields, and $N$ is Nijenhuis if and only if $N_e$ is a Nijenhuis operator on the Lie algebra $\g$.

It is also easy to see that an invariant $\cN$-operator $N$ is projectable if and only if it is projectable along the fiber over $[e]$, i.e., along $K\subset G$.
This, in turn, is equivalent to the fact that $N_e(\k)\subset \k$ and, for every $k\in K$, $X\in\g$,
\be\label{K}N_k(Xk)-N_e(X)k\in k\k.\ee
Indeed, the curves $t\mapsto \exp(tX)$ and $t\mapsto \exp(tX)k$ project onto the same curve on $G/K$, so $\sT\zt(X)=\sT\zt(Xk)$, which implies that $X\in\g$ and $Y_k\in\sT_kG$ project onto the same vector if and only if $$(Y_k)k^{-1}-X\in\k.$$
Condition (\ref{K}) means that $N$ is also invariant with respect to the right action of $K$. Combining (\ref{K}) with (\ref{G}), we can rewrite the condition for invariant $N$ to be projectable, as (cf. \cite[Definition 2.9]{Golinski:2025})
$$N_e(\k)\subset\k\quad\text{and}\quad N_e\bl\Ad_k(X)\br-\Ad_k\bl N_e(X)\br\in\k,$$
for all $k\in K$ and all $X\in\g$. Now, Theorem 3.6 in \cite{Golinski:2025} is nothing but a particular case of our Theorem \ref{main}.

\mn If complex structures are concerned in the homogeneous case, we can work with left-invariant vector fields and reduce everything to the Lie algebra $\sT_eG=\g$ and the Lie algebra $\cN$-operator $N_e$. In this way, we recover \cite[Corollary 4.7]{Golinski:2025}.
\begin{theorem} A homogeneous $\cN$-operator $N$ on a Banach Lie group $G$ projects onto a complex structure $N_0$ on the homogeneous manifold $G/K$ if and only if $N_e^2+\id$ and $\cT_{N_e}$ take values in the Lie subalgebra $\k$. The latter is equivalent to the fact that one (thus both) of the vector spaces
$$\cZ_\pm=\{X+iY\in\g^\C M\,\big|\, Y\mp N_eX\in \k\}$$
is a Lie subalgebra in the complex Lie algebra $\g^\C$.
\end{theorem}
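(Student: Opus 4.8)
The plan is to reduce the entire statement to the Lie algebra $\g=\sT_eG$ by exploiting $G$-invariance, and then to apply the complex-structure theorem for Banach fibrations together with Theorem~\ref{main}. The standing assumption throughout is that $N$ is projectable, which in the invariant case means $N_e(\k)\subset\k$ together with condition (\ref{K}).

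First I would set up the dictionary between invariant objects on $G$ and algebraic data at $e$. By the invariance condition (\ref{G}), $N$ carries left-invariant vector fields to left-invariant vector fields, hence is completely determined by $N_e:\g\to\g$; moreover, for left-invariant $X,Y$ the torsion $\cT_N(X,Y)$ is again left-invariant and its value at $e$ is the Lie-algebra torsion $\cT_{N_e}(X,Y)$. Since the vertical subbundle is $V_g=g\k$, a left-invariant tensor takes vertical values exactly when its value at $e$ lands in $\k$. Thus ``$N^2+\id$ is vertical'' translates into $(N_e^2+\id)(\g)\subset\k$, and ``$\cT_N$ is vertical'' into $\cT_{N_e}(\g,\g)\subset\k$.

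Next I would invoke the preceding complex-structure theorem of this section: a projectable $\cN$-operator projects onto a complex structure $N_0$ if and only if $N^2+\id$ is vertical and the distribution $\hat Z_+$ is involutive. Once $N^2+\id$ is vertical one has $N_0^2=-\id$, so by Theorem~\ref{main} involutivity of $\hat Z_+$ is equivalent to the verticality of $\cT_N$. Feeding in the dictionary above immediately yields the first equivalence of the theorem: $N$ projects onto a complex structure $N_0$ precisely when $N_e^2+\id$ and $\cT_{N_e}$ take values in $\k$.

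For the subalgebra characterisation I would identify $\hat Z_\pm$ as the left-invariant complex distribution on $G$ whose fiber at $e$ is exactly $\cZ_\pm=\{X+iY\in\g^\C : Y\mp N_eX\in\k\}$, obtained by left-translating $\hat Z_\pm=\{X+iY : Y\mp NX\in V\}$ back to the identity. Because the bracket of two left-invariant complex vector fields is again left-invariant and equals the $\g^\C$-bracket at $e$, involutivity of $\hat Z_\pm$ is equivalent to $\cZ_\pm$ being closed under the bracket of $\g^\C$, i.e.\ to $\cZ_\pm$ being a complex Lie subalgebra; combined with the previous step this gives the last equivalence. The step I expect to be the main obstacle is precisely this passage from the manifold-level involutivity of $\hat Z_\pm$ on $G$ to the algebraic subalgebra condition: one must verify that $\hat Z_\pm$ is genuinely generated by its left-invariant sections, so that involutivity may be tested on these alone, and that the complexification does not interfere, using $[gX,gY]=g[X,Y]$ for left-invariant fields. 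The remaining verifications — the verticality translations and the reduction of $\cT_N$ to $\cT_{N_e}$ — are routine consequences of invariance.
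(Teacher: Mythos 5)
Your proposal takes essentially the same route as the paper, which itself only sketches this theorem as a reduction to the Lie algebra $\g$ via left-invariance followed by an application of the fibration theorems of the previous sections; your dictionary between invariant tensors on $G$ and their values at $e$ (with $V_g=g\k$ translating ``vertical'' into ``valued in $\k$'') is exactly the intended argument. The step you flag as the main obstacle --- passing between involutivity of $\hat Z_\pm$ and the subalgebra property of $\cZ_\pm$ --- is indeed the only delicate point in infinite dimensions, but it can be closed without proving that involutivity may be tested on left-invariant sections alone: the implication ``involutive $\Rightarrow$ subalgebra'' needs only left-invariant sections, and for the converse one can bypass the distribution entirely by checking algebraically that, given projectability ($N_e(\k)\subset\k$ and $N_e\circ\ad_\kappa-\ad_\kappa\circ N_e$ valued in $\k$ for $\kappa\in\k$) and $(N_e^2+\id)(\g)\subset\k$, closure of $\cZ_\pm$ under the $\g^\C$-bracket is equivalent to $\cT_{N_e}(\g,\g)\subset\k$, after which the fibration theorem returns the involutivity of $\hat Z_\pm$.
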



\vskip.5cm
\noindent Katarzyna Grabowska\\\emph{Faculty of Physics,
University of Warsaw,}\\
{\small ul. Pasteura 5, 02-093 Warszawa, Poland} \\{\tt konieczn@fuw.edu.pl}\\
https://orcid.org/0000-0003-2805-1849\\

\noindent Janusz Grabowski\\\emph{Institute of Mathematics, Polish Academy of Sciences}\\{\small ul. \'Sniadeckich 8, 00-656 Warszawa,
Poland}\\{\tt jagrab@impan.pl}\\  https://orcid.org/0000-0001-8715-2370
\\
\end{document}